\providecommand{\keywords}[1]
{
  \noindent \small	
  \textbf{Keywords:} #1
}
\providecommand{\amscode}[1]
{
  \noindent \small	
  \textbf{AMS subject classifications:} #1
}
\newtheorem{theorem}{Theorem}
\newtheorem{lemma}[theorem]{Lemma}
\newtheorem{corollary}[theorem]{Corollary}
\newcommand{\rd}{\, \mathrm{d}}
\newcommand{\bsk}{\boldsymbol{k}}
\newcommand{\bsell}{\boldsymbol{\ell}}
\newcommand{\bszero}{\boldsymbol{0}}
\newcommand{\bsx}{\boldsymbol{x}}
\newcommand{\wor}{\mathrm{wor}}
\newcommand{\CC}{\mathbb{C}}
\newcommand{\II}{\mathbb{I}}
\newcommand{\NN}{\mathbb{N}}
\newcommand{\PP}{\mathbb{P}}
\newcommand{\RR}{\mathbb{R}}
\newcommand{\ZZ}{\mathbb{Z}}
\newcommand{\Acal}{\mathcal{A}}
\DeclareMathOperator{\supp}{supp}
\DeclareMathOperator{\width}{width}
\title{Strong tractability for multivariate integration in a subspace of the Wiener algebra\thanks{The work of the author is supported by JSPS KAKENHI Grant Number 23K03210.}}
\author{Takashi Goda\thanks{School of Engineering, University of Tokyo, 7-3-1 Hongo, Bunkyo-ku, Tokyo 113-8656, Japan ({\tt goda@frcer.t.u-tokyo.ac.jp})}
}
\date{\today}
\begin{document}

\maketitle
\begin{abstract}
Building upon recent work by the author, we prove that multivariate integration in the following subspace of the Wiener algebra over $[0,1)^d$ is strongly polynomially tractable:
\[ F_d:=\left\{ f\in C([0,1)^d)\:\middle| \: \|f\|:=\sum_{\bsk\in \ZZ^{d}}|\hat{f}(\bsk)|\max\left(\width(\supp(\bsk)),\min_{j\in \supp(\bsk)}\log |k_j|\right)<\infty \right\},\]
with $\hat{f}(\bsk)$ being the $\bsk$-th Fourier coefficient of $f$, $\supp(\bsk):=\{j\in \{1,\ldots,d\}\mid k_j\neq 0\}$, and $\width: 2^{\{1,\ldots,d\}}\to \{1,\ldots,d\}$ being defined by
\[ \width(u):=\max_{j\in u}j-\min_{j\in u}j+1,\]
for non-empty subset $u\subseteq \{1,\ldots,d\}$ and $\width(\emptyset):=1$. Strong polynomial tractability is achieved by an explicit quasi-Monte Carlo rule using a multiset union of Korobov's $p$-sets. We also show that, if we replace $\width(\supp(\bsk))$ with 1 for all $\bsk\in \ZZ^d$ in the above definition of norm, multivariate integration is polynomially tractable but not strongly polynomially tractable. 
\end{abstract}
\keywords{multivariate integration, polynomial tractability, Wiener algebra, quasi-Monte Carlo, exponential sum}

\amscode{41A55, 41A58, 42B05, 65D30, 65D32}

\section{Introduction and main results}
This paper concerns numerical integration for multivariate functions defined over the $d$-dimensional unit cube. For a Riemann integrable function $f: [0,1)^d\to \RR$, we approximate its integral
\[ I_d(f)=\int_{[0,1)^d}f(\bsx)\rd \bsx \]
by
\[ Q_{d,n}(f)=\sum_{h=0}^{n-1}w_h f(\bsx_h) \]
with sets of $n$ sampling points $\{\bsx_0,\ldots,\bsx_{n-1}\}\subset [0,1)^d$ and associated weights $\{w_0,\ldots,w_{n-1}\}.$ Quasi-Monte Carlo (QMC) rule denotes a special case of $Q_{d,n}$ where all the weights $w_h$ are equal to $1/n$. The worst-case error of an algorithm $Q_{d,N}$ in a Banach space $F$ with norm $\|\cdot\|$ is defined by
\[ e^{\wor}(F,Q_{d,n}):=\sup_{f\in F, \|f\|\leq 1}\left| I_d(f)-Q_{d,n}(f)\right|. \] 
In the field of information-based complexity \cite{NW08, NW10, TWW88}, we are interested in how the \emph{information complexity} $n(\varepsilon, d, F)$ grows in the reciprocal of the error tolerance $\varepsilon\in (0,1)$ and the dimension $d$. Here, the information complexity is defined as the minimum number of function values, among all possible $Q_{d,n}$, needed to make the worst-case error in $F$ no greater than $\varepsilon$, that is,
\[ n(\varepsilon, d, F):= \min\{ n\in \NN\, \mid \, \exists Q_{d,n}: e^{\wor}(F,Q_{d,n})\leq \varepsilon\}.\]

In a recent work by the author \cite{G23}, it has been proven that the information complexity for the following unweighted subspace of the Wiener algebra grows only polynomially both in $\varepsilon^{-1}$ and $d$:
\[ F_d^1:=\left\{ f\in C([0,1)^d)\:\middle| \: \|f\|:=\sum_{\bsk\in \ZZ^{d}}|\hat{f}(\bsk)|\max\left(1,\min_{j\in \supp(\bsk)}\log |k_j|\right)<\infty \right\},\]
with $\hat{f}(\bsk)$ being the $\bsk$-th Fourier coefficient of $f$, i.e.,
\[ \hat{f}(\bsk) = \int_{[0,1)^d}f(\bsx)\exp(-2\pi i\bsk\cdot \bsx)\rd \bsx,\]
and $\supp(\bsk):=\{j\in \{1,\ldots,d\}\mid k_j\neq 0\}$. More precisely, it has been shown that an upper bound $n(\varepsilon, d, F_d^1)\leq C_1 \varepsilon^{-3}d^3$ holds for a positive constant $C_1$, concluding that the problem of multivariate integration in $F_d^1$ is \emph{polynomially tractable.} We refer to \cite{K23, KV23} for more recent progress on this line of research.  In this context, an \emph{unweighted} function space $F$ refers to a space where all variables and groups of variables play an equal role. Therefore, for any permutation matrix $\pi$ and $f\in F$, it holds that $f\circ \pi \in F$ and $\|f\circ \pi\|=\|f\|$. The result presented in \cite{G23} builds upon the work of Dick \cite{D14}, who established polynomial tractability for multivariate integration in the intersection of the Wiener algebra and an unweighted space of H\"{o}lder continuous functions.

As a continuation of \cite{G23}, we prove the following result in this paper:
\begin{theorem}\label{thm:main1}
    Let $F_d^2$ be a subspace of the Wiener algebra defined by
    \[ F^2_d:=\left\{ f\in C([0,1)^d)\:\middle| \: \|f\|:=\sum_{\bsk\in \ZZ^{d}}|\hat{f}(\bsk)|\max\left(\width(\supp(\bsk)),\min_{j\in \supp(\bsk)}\log |k_j|\right)<\infty \right\}, \]
    where $\width: 2^{\{1,\ldots,d\}}\to \{1,\ldots,d\}$ is defined by
    \[ \width(u):=\max_{j\in u}j-\min_{j\in u}j+1,\]
    for non-empty subset $u\subseteq \{1,\ldots,d\}$, and $\width(\emptyset)=1.$ Then, there exists a positive constant $C_2$ such that, for any $d\in \NN$ and $\varepsilon \in (0,1)$, we have
    \[ n(\varepsilon, d, F_d^2)\leq C_2 \varepsilon^{-3}/(\log \varepsilon^{-1}). \]
\end{theorem}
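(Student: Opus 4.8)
The plan is to take $Q_{d,n}$ to be the equal-weight rule supported on a multiset union of Korobov's $p$-sets over a short window of primes — the same flavour of construction as in \cite{G23} — and to extract from the rank-$1$ lattice structure of those sets an exponential-sum bound governed by $\width(\supp(\bsk))$ rather than by $d$. Concretely, fix a parameter $P$, let $\mathcal{P}$ be the set of primes in $(P/2,P]$, and for each $p\in\mathcal{P}$ let
\[\mathcal{K}_p:=\left\{\left(\left\{\frac{a}{p}\right\},\left\{\frac{ag}{p}\right\},\dots,\left\{\frac{ag^{d-1}}{p}\right\}\right):a,g\in\{0,\dots,p-1\}\right\},\]
a multiset of $p^2$ points; let $Q_{d,N}$ be the QMC rule on $\bigcup_{p\in\mathcal{P}}\mathcal{K}_p$, so that $N=\sum_{p\in\mathcal{P}}p^2$. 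Expanding $f\in F_d^2$ into its absolutely convergent Fourier series, using $\bsk\cdot(\{a/p\},\dots,\{ag^{d-1}/p\})\equiv(a/p)\,h_{\bsk}(g)\pmod 1$ with $h_{\bsk}(x):=\sum_{j=1}^{d}k_jx^{j-1}\in\ZZ[x]$, and evaluating the resulting geometric sum over $a$, the standard Wiener-algebra estimate gives
\[e^{\wor}(F_d^2,Q_{d,N})\le\sup_{\bsk\in\ZZ^d\setminus\{\bszero\}}\frac{|\hat{Q}(\bsk)|}{\max\left(\width(\supp(\bsk)),\min_{j\in\supp(\bsk)}\log|k_j|\right)},\qquad\hat{Q}(\bsk)=\frac1N\sum_{p\in\mathcal{P}}p\,R_p(\bsk),\]
where $R_p(\bsk):=\#\{g\in\{0,\dots,p-1\}:p\mid h_{\bsk}(g)\}$.

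The crux is to bound $R_p(\bsk)$, and this is where $\width$ enters. Write $m:=\min\supp(\bsk)$ and $M:=\max\supp(\bsk)$, so that $\width(\supp(\bsk))=M-m+1$, and factor $h_{\bsk}(x)=x^{m-1}\widetilde{h}_{\bsk}(x)$ with $\widetilde{h}_{\bsk}(x):=\sum_{j=m}^{M}k_jx^{j-m}$, a polynomial of degree at most $M-m=\width(\supp(\bsk))-1$. Call $p$ \emph{bad} for $\bsk$ if $p\mid k_j$ for every $j\in\supp(\bsk)$, and \emph{good} otherwise. If $p$ is good then $\widetilde{h}_{\bsk}$ is a nonzero polynomial over the field $\ZZ/p\ZZ$, hence has at most $\width(\supp(\bsk))-1$ roots there, so $R_p(\bsk)\le 1+(\width(\supp(\bsk))-1)=\width(\supp(\bsk))$. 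If $p$ is bad then one only has $R_p(\bsk)=p$, but any bad prime divides the nonzero integer $k_{j^\ast}$ for a $j^\ast\in\supp(\bsk)$ with $|k_{j^\ast}|$ minimal, and a nonzero integer $n$ has at most $\log|n|/\log(P/2)$ distinct prime divisors greater than $P/2$; hence $\mathcal{P}$ contains at most $\min_{j\in\supp(\bsk)}\log|k_j|/\log(P/2)$ bad primes. The key observation — which I expect to be both the main obstacle and the source of the dimension-free (strong) tractability — is that using the rank-$1$ lattice set $\mathcal{K}_p$ in place of the classical ``consecutive powers'' set $(\{a^j/p\})_{j=1}^{d}$ turns the degree a Weil-type estimate would charge, namely $\max\supp(\bsk)$ (as large as $d$), into $M-m=\width(\supp(\bsk))-1$, which is precisely the quantity appearing in the norm of $F_d^2$.

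Combining these bounds with the prime number theorem, so that $|\mathcal{P}|=\Theta(P/\log P)$ and $N=\Theta(P^3/\log P)$, and using $p\le P$ on $\mathcal{P}$, one obtains
\[\frac1N\sum_{p\in\mathcal{P}}p\,R_p(\bsk)\le\frac1N\left(P^2\cdot\frac{\min_{j\in\supp(\bsk)}\log|k_j|}{\log(P/2)}+P\,|\mathcal{P}|\,\width(\supp(\bsk))\right)=O\!\left(\frac{\min_{j\in\supp(\bsk)}\log|k_j|+\width(\supp(\bsk))}{P}\right)\]
with an absolute implied constant. Since $a+b\le 2\max(a,b)$, dividing by the normalising factor and taking the supremum over $\bsk$ gives $e^{\wor}(F_d^2,Q_{d,N})=O(1/P)$ with a constant independent of $d$. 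Hence there is an absolute $c$ such that $e^{\wor}(F_d^2,Q_{d,N})\le\varepsilon$ whenever $P\ge c/\varepsilon$; taking $P=\lceil c/\varepsilon\rceil$ then yields $n(\varepsilon,d,F_d^2)\le N=O(P^3/\log P)=O\!\left(\varepsilon^{-3}/\log\varepsilon^{-1}\right)$ uniformly in $d$, which is Theorem~\ref{thm:main1}.

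A few routine points remain but should cause no difficulty: the prime-number-theorem asymptotics and $\mathcal{P}\ne\emptyset$ require $P$ above an absolute threshold, which is harmless since the asserted bound is vacuous as $\varepsilon\to 1^-$; the regime $\width(\supp(\bsk))>p$, which can occur for large $d$, is absorbed by the trivial bound $R_p(\bsk)\le p<\width(\supp(\bsk))$; and the Wiener-algebra error identity and the bad-prime counting are exactly as in \cite{G23,D14}.
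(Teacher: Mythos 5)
Your proposal is correct and follows essentially the same route as the paper: a multiset union of Korobov-type $p$-sets over the primes in $(P/2,P]$, an exponential-sum bound obtained by factoring out the lowest power of the summation variable so that the relevant polynomial mod $p$ has degree $\width(\supp(\bsk))-1$, the divisor-counting bound $\min_{j\in\supp(\bsk)}\log|k_j|/\log(P/2)$ for the ``bad'' primes dividing every component of $\bsk$, and the parameter choice $P\asymp\varepsilon^{-1}$ with $N\asymp P^3/\log P$. The only (harmless, and arguably slightly cleaner) deviation is your use of generating vectors $(1,g,\ldots,g^{d-1})$ in place of the paper's $(\ell,\ell^2,\ldots,\ell^d)$, which makes the coefficients of the root-counting polynomial equal to the $k_j$ themselves and thus lets you dispense with the side condition $p\ge d$ imposed in the paper's Lemma~\ref{lem:exponential_sum}.
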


In comparison to the result of \cite{G23} for $F_d^1$, by replacing 1 (the first argument in taking the maximum for each $\bsk$) with $\width(\supp(\bsk))$ in the definition of norms, the polynomial dependence of the information complexity on the dimension $d$ does not show up anymore, meaning that the problem of multivariate integration in $F_d^2$ is \emph{strongly polynomially tractable.} This result is strengthened by the following theorem on the former space $F_d^1$.

\begin{theorem}\label{thm:main2}
    For any linear algorithm $Q_{d,n}$ using $n$ function values, we have $e^{\wor}(F_d^1,Q_{d,n})\geq d/(2n^2)$ for any $d\in \NN$ and $n>2d$.
\end{theorem}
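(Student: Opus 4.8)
The plan is to argue by a fooling‑function reduction. For \emph{any} nodes $\bsx_0,\dots,\bsx_{n-1}\in[0,1)^d$ it suffices to exhibit an $f\in F_d^1$ with $\|f\|\le 1$, with $f(\bsx_h)=0$ for every $h$, and with $I_d(f)=\widehat f(\bszero)\ge d/(2n^2)$: then $Q_{d,n}(f)=\sum_h w_h f(\bsx_h)=0$ for \emph{every} choice of weights, so $e^{\wor}(F_d^1,Q_{d,n})\ge|I_d(f)-Q_{d,n}(f)|=I_d(f)$. Discarding the weights in this way is exactly what makes the bound hold for all linear algorithms at once. The structural feature that should force a $d$‑dependent lower bound — and the feature that distinguishes $F_d^1$ from $F_d^2$ — is that the $F_d^1$‑norm does not charge for the \emph{width} of the support: the mode $e^{2\pi i\bsk\cdot\bsx}$ has norm $1$ as soon as some coordinate satisfies $|k_j|\le 2$ (so that $\min_{j\in\supp(\bsk)}\log|k_j|\le\log 2<1$). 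Thus $F_d^1$ contains a "cheap" trigonometric‑polynomial subspace far larger than a naive degree‑of‑freedom count would suggest, and the fooling function should be built inside it.

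Concretely I would take $f=1-\prod_{j=1}^d q_j(x_j)$, where each $q_j$ is a real univariate trigonometric polynomial of degree of order $n/(2d)$, together with the following steps. (i) Choose the $q_j$ so that $\prod_{j=1}^d q_j(x_{h,j})=1$ for all $h$ (equivalently $f(\bsx_h)=0$); the hypothesis $n>2d$ is used here to guarantee that the number of free coefficients, $\sum_j(2\deg q_j+1)$, comfortably exceeds the $n$ interpolation constraints, so admissible $q_j$ exist. (ii) Normalise so that $\int_0^1 q_j<1$, whence $I_d(f)=1-\prod_j\int_0^1 q_j>0$, and arrange each defect $1-\int_0^1 q_j$ to contribute at the $1/n^2$ scale so that, summing the $d$ of them, $I_d(f)\gtrsim d/n^2$. (iii) Bound $\|f\|_{F_d^1}$ by a constant: expanding $1-\prod_j q_j=-\sum_{\emptyset\neq u}\prod_{j\in u}(q_j-1)$, the multilinear terms with $|u|\ge 2$ only involve modes $\bsk$ whose nonzero entries have modulus $\lesssim n/d$, and because the weight is $\max(1,\min_{j\in u}\log|k_j|)$ a crude triangle‑inequality estimate controls those, while the univariate terms $|u|=1$ are handled using the low degree $O(n/d)$ of the $q_j$ and a positivity normalisation of $q_j-1$. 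Tracking constants then yields $d/(2n^2)$.

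The hard part is precisely carrying out (i)–(iii) \emph{uniformly} over all node configurations: one must produce the interpolating $q_j$ with coefficients — and hence with $\|f\|_{F_d^1}$ and with the integrals $\int_0^1 q_j$ — under explicit control no matter how adversarially the one‑dimensional projections $\{x_{h,j}\}_h$ are placed, which is an Ingham/large‑sieve–flavoured question about univariate exponential sums; clustered projected nodes (and the degenerate situation where an $n$‑point rule happens to integrate the chosen cheap subspace exactly) is the case to watch, and the expected fix is to enlarge the admissible degree slightly, again using $n>2d$. I would also record the complementary, elementary construction: placing a single univariate bump of width $\ge 1/n$ in a gap of one coordinate's projected nodes gives $f(\bsx_h)=0$ for all $h$ with $I_d(f)/\|f\|_{F_d^1}$ of order $1/(n\log n)$, which already establishes the bound $d/(2n^2)$ whenever $d\lesssim n/\log n$ (and in particular for small $d$), so only the regime $d$ comparable to $n$ genuinely needs the separable construction above.
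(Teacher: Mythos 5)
Your overall framing (exhibit $f$ with $\|f\|\le 1$, $f(\bsx_h)=0$ for all $h$, and $I_d(f)\ge d/(2n^2)$) is the right reduction, but the construction you propose has a genuine gap that you yourself flag and do not close. The constraints $\prod_{j}q_j(x_{h,j})=1$ are \emph{not} linear in the coefficients of the $q_j$, so the count ``$\sum_j(2\deg q_j+1)>n$ unknowns versus $n$ equations'' does not yield existence: the solution set of a nonlinear (inhomogeneous, multilinear-in-blocks) system can be empty even when parameters outnumber equations, and the natural linearization --- forcing each $q_j(x_{h,j})=1$ for every $h$ --- requires degree of order $n$ in \emph{each} coordinate, which destroys the $d$-gain you need. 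Even granting existence, you have no uniform control of the coefficients of the interpolants (hence of $\|f\|_{F_d^1}$ and of $\int_0^1 q_j$) over adversarial node configurations; as you note, this is an Ingham/large-sieve-type extremal problem, and it is left unsolved. Finally, your fallback bump construction only gives the bound in the regime $d\lesssim n/\log n$, whereas the theorem must cover all $d<n/2$, so the hard regime is exactly the one your main construction would have to handle.

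The paper avoids all of this with a purely linear-algebraic fooling function: take the frequency set $\Acal=\{\bszero\}\cup\{$axis vectors with one nonzero entry in $\{1,\dots,\lceil n/d\rceil\}\}$, which has $1+d\lceil n/d\rceil>n$ elements, so there is a nonzero trigonometric polynomial $g=\sum_{\bsk\in\Acal}c_{\bsk}e^{2\pi i\bsk\cdot\bsx}$ vanishing at all $n$ nodes. Normalizing so that the largest coefficient is $c_{\bsell}=1$ and \emph{modulating}, $\tilde g(\bsx)=C e^{-2\pi i\bsell\cdot\bsx}g(\bsx)$, moves that coefficient to frequency $\bszero$, so the (real part of the) resulting function has integral exactly $C$ while still vanishing at the nodes; the norm is bounded by $C\sum_{\bsk\in\Acal}\max(1,\min_j\log|k_j-\ell_j|)\le C\cdot 2n^2/d$ because all shifted frequencies stay axis-aligned with entries of modulus at most $\lceil n/d\rceil$. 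The two ingredients your proposal is missing are precisely this pigeonhole existence over a cheap frequency ``cross'' (which replaces your hard interpolation problem) and the modulation trick (which converts ``largest coefficient equals $1$'' into ``integral equals $C$'' without any positivity or separability assumptions).
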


Note that there is a significant gap between the lower bound on the worst-case error obtained above and the upper bound of order $dn^{-1/3}$ shown in \cite{G23}. Nevertheless, this result implies that a dependence of the information complexity on the dimension $d$ cannot be eliminated for $F_d^1$. Therefore, the problem of multivariate integration in $F_d^1$ is polynomially tractable but \emph{not} strongly polynomially tractable. As a future research direction, it would be interesting to study whether an intermediate space between $F_d^1$ and $F_d^2$ still exhibits strong polynomial tractability for multivariate integration. As we have $1\leq |\supp(\bsk)|\leq \width(\supp(\bsk))$ for all $\bsk\in \ZZ^d$ when defining $|\supp(\bszero)|=1$, one of the most natural spaces we can consider is an unweighted space
\[ F_d^3:=\left\{ f\in C([0,1)^d)\:\middle| \: \|f\|:=\sum_{\bsk\in \ZZ^{d}}|\hat{f}(\bsk)|\max\left(|\supp(\bsk)|,\min_{j\in \supp(\bsk)}\log |k_j|\right)<\infty \right\}. \]

Note that, although the space $F_d^2$ is weighted, it remains invariant under the reversion of the variables, i.e., if $f\in F_d^2$, then we have $g\in F_d^2$ and $\|f\|=\|g\|$ where $g(x_1,\ldots,x_d)=f(x_d,\ldots,x_1)$. This is in contrast to many existing results on strong polynomial tractability for multivariate integration in the worst-case setting, where weight parameters are introduced to model the relative importance of each group of variables, and variables are typically assumed ordered in decreasing importance order. See \cite{DGPW17,DP15,NW08,NW10,SW98} among many others. In fact, it seems not possible to characterize the space $F_d^2$ in such a way. The author believes that further tractability studies in subspaces of the Wiener algebra will offer new insights into the field of information-based complexity, particularly regarding (strong) polynomial tractability in (un)weighted spaces.

\section{Proof of Theorem~\ref{thm:main1}}
This section is devoted to proving Theorem~\ref{thm:main1} by providing an explicit QMC rule that attains the desired worst-case error bound. The QMC rule considered here is exactly the same as the one discussed in \cite{G23}. For an integer $m\geq 2$, let
\[ \PP_m := \{\lceil m/2\rceil<p\leq m\, \mid\, \text{$p$ is prime} \}.\]
It is known that there exist constants $c_{\PP}$ and $C_{\PP}$ with $0<c_{\PP}<\min(1,C_{\PP})$ such that
\begin{align}\label{eq:prime_num}
c_{\PP}\frac{m}{\log m}\leq |\PP_m|\leq C_{\PP}\frac{m}{\log m}, \end{align}
for all $m\geq 2$, see \cite[Corollaries~1--3]{RS62}. Now, given an integer $m\geq 2$, we define two different point sets as multiset unions:
\[ P_{d,m}^1=\bigcup_{p\in \PP_m}S_{d,p}\quad \text{and}\quad P_{d,m}^2=\bigcup_{p\in \PP_m}T_{d,p},\]
where $S_{d,p}=\{\bsx_h^{(p)}\mid 0\leq h<p^2\}$ and $T_{d,p}=\{\bsx_{h,\ell}^{(p)}\mid 0\leq h,\ell<p\}$ are sets with $p^2$ points known as \emph{Korobov's $p$-sets} \cite{D14,DP15,HW81,K63}. These point sets are defined as follows:
\[ \bsx_h^{(p)}=\left( \left\{ \frac{h}{p^2}\right\}, \left\{ \frac{h^2}{p^2}\right\},\ldots, \left\{ \frac{h^d}{p^2}\right\}\right),\]
and
\[  \bsx_{h,\ell}^{(p)}=\left( \left\{ \frac{h\ell}{p}\right\}, \left\{ \frac{h\ell^2}{p}\right\},\ldots, \left\{ \frac{h\ell^d}{p}\right\}\right), \]
respectively, where we write $\{x\}=x-\lfloor x\rfloor$ to denote the fractional part of a non-negative real number $x$. It is important to note that taking the multiset unions of Korobov's $p$-sets with different primes $p$ is crucial in our error analysis. Trivially we have
\[ |P_{d,m}^1|=|P_{d,m}^2|=\sum_{p\in \PP_m}p^2.\]

The following result on the exponential sums refines the known results from \cite[Lemmas~4.5 \& 4.6]{HW81} as well as \cite[Lemmas~4.4 \& 4.5]{DP15}.
\begin{lemma}\label{lem:exponential_sum}
    Let $d\in \NN$ and $p$ be a prime with $p\geq d$. For any $\bsk\in \ZZ^d\setminus \{\bszero\}$ such that there exists at least one index $j^*\in \{1,\ldots,d\}$ where $k_{j^*}$ is not divisible by $p$, i.e., $p\nmid \bsk$, the following bounds hold:
    \[ \left|\frac{1}{p^2}\sum_{h=0}^{p^2-1}\exp\left( 2\pi i\bsk\cdot \bsx_h^{(p)}\right)\right| \leq \frac{\width(\supp(\bsk))}{p},\]
    and
    \[ \left|\frac{1}{p^2}\sum_{h,\ell=0}^{p-1}\exp\left( 2\pi i\bsk\cdot \bsx_{h,\ell}^{(p)}\right)\right| \leq \frac{\width(\supp(\bsk))}{p}.\]
\end{lemma}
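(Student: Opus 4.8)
The plan is to reduce each normalized exponential sum to counting roots of a polynomial over $\mathbb{F}_p$, and then to bound the number of roots by the width of $\supp(\bsk)$ rather than by its degree. Set $g(X):=\sum_{j=1}^{d}k_jX^j\in\ZZ[X]$. Since each $k_j\lfloor h^j/p^2\rfloor$ is an integer, $\exp(2\pi i\bsk\cdot\bsx_h^{(p)})=\exp(2\pi i g(h)/p^2)$, and likewise $\exp(2\pi i\bsk\cdot\bsx_{h,\ell}^{(p)})=\exp(2\pi i h\,g(\ell)/p)$.

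For the second point set I would sum over $h$ first; by orthogonality of the additive characters of $\ZZ/p\ZZ$,
\[ \frac{1}{p^2}\sum_{h,\ell=0}^{p-1}\exp\!\left(\frac{2\pi i h\,g(\ell)}{p}\right)=\frac{M}{p},\qquad M:=\#\{\ell\in\{0,\dots,p-1\}: p\mid g(\ell)\}. \]
By hypothesis some $k_{j^*}$ is not divisible by $p$, so $g\bmod p$ is a nonzero polynomial. Writing $v:=\{j: p\nmid k_j\}\subseteq\supp(\bsk)$, we have $g(X)\equiv X^{\min v}\tilde g(X)\pmod p$ with $\tilde g(0)\not\equiv0$ and $\deg\tilde g=\max v-\min v$, so the $\mathbb{F}_p$-roots of $g$ are $0$ together with at most $\deg\tilde g$ further roots; hence $M\le 1+(\max v-\min v)\le 1+(\max\supp(\bsk)-\min\supp(\bsk))=\width(\supp(\bsk))$, as claimed.

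For the first point set I would use the $p$-adic splitting $h=a+bp$ with $a,b\in\{0,\dots,p-1\}$. From $(a+bp)^j\equiv a^j+ja^{j-1}bp\pmod{p^2}$ one gets $g(a+bp)\equiv g(a)+bp\,g'(a)\pmod{p^2}$, where $g'$ is the formal derivative, and summing over $b$ collapses the sum to $p\sum_{a:\,p\mid g'(a)}\exp(2\pi i g(a)/p^2)$; thus the left-hand side is at most $N/p$, where $N:=\#\{a\in\{0,\dots,p-1\}: p\mid g'(a)\}$. Since $p\ge d\ge j$ for every $j\in\supp(\bsk)$, we have $p\nmid j$ whenever $j<p$, so the monomials of $g'$ surviving modulo $p$ are exactly those indexed by $w:=\{j\in\supp(\bsk): j<p,\ p\nmid k_j\}$. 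When $w\neq\emptyset$, $g'\bmod p=X^{\min w-1}r(X)$ with $r(0)\not\equiv0$ and $\deg r=\max w-\min w$, so $N\le 1+(\max w-\min w)\le\width(\supp(\bsk))$.

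The delicate point is exactly this last step, namely the degenerate case $w=\emptyset$ (equivalently $g'\equiv0\pmod p$): for $p\ge d$ this can only occur when the effective support $v$ of $g$ modulo $p$ equals $\{p\}$, which forces $d=p$ and pins $\bsk$ down to a very special form; this configuration has to be treated separately, or excluded by requiring $p$ to be large relative to $d$. Apart from it, the argument is routine, and the one new ingredient compared with the degree-based estimates of \cite{HW81} and \cite{DP15} is the extraction of the common power $X^{\min v}$ (respectively $X^{\min w-1}$), which is precisely what upgrades the bound $d/p$ to $\width(\supp(\bsk))/p$.
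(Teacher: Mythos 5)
Your argument is exactly the one the paper uses: the $p$-adic splitting $h=a+bp$, the expansion $(a+bp)^j\equiv a^j+ja^{j-1}bp\pmod{p^2}$, the reduction via character orthogonality to counting roots modulo $p$, and then the crucial extra step of factoring out the common power of $X$ so that the root count is bounded by $\width(\supp(\bsk))$ rather than by $\max\supp(\bsk)$. Your treatment of the second point set, where the relevant polynomial is $g$ itself rather than $g'$, is clean and needs no caveat, since $g\bmod p$ is nonzero as soon as $p\nmid\bsk$.

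The degenerate case you flag for the first bound is not a hypothetical worry but a genuine slip in the paper. The paper's proof infers ``the number of solutions of the congruence $\sum_{j\in\supp(\bsk)}k_jjh_0^{j-1}\equiv 0\pmod p$ is at most $j_{\max}-j_{\min}+1$'' from the degree of the integer polynomial alone, without verifying that this polynomial is nonzero modulo $p$. When $p=d$ the coefficient $k_d\cdot d$ is automatically $\equiv 0\pmod p$, so if in addition $p\mid k_j$ for every other $j\in\supp(\bsk)$ then the derivative polynomial vanishes identically modulo $p$, all $p$ residues are solutions, and the chain of inequalities only yields the trivial bound $1$, not $\width(\supp(\bsk))/p$. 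This is not just a lossy proof: the first inequality of the lemma is in fact \emph{false} for $p=d$. Take $p=d=2$ and $\bsk=(0,1)$; then
\[
\frac{1}{4}\sum_{h=0}^{3}\exp\!\left(2\pi i\,\frac{h^2}{4}\right)=\frac{1+i+1+i}{4}=\frac{1+i}{2},
\]
whose modulus is $1/\sqrt{2}$, exceeding the claimed bound $\width(\{2\})/2=1/2$. The correct hypothesis is $p>d$ (equivalently, your set $w$ is then automatically nonempty). This does not affect the downstream results: in Corollary~\ref{cor:exponential_sum} one should simply require $\min_{p\in\PP_m}p>d$, and the proof of Theorem~\ref{thm:main1} goes through unchanged with the corresponding adjustment of $m$. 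But it is a real gap in the paper that your analysis correctly exposes, and it should not be waved away as ``routine.''
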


\begin{proof}
    Let us consider the first bound. As we have $\{0,\ldots,p^2-1\}=\{h_0+h_1p \mid 0\leq h_0,h_1<p\}$ and, for each pair of $h_0,h_1\in \{0,\ldots,p-1\}$, it holds that
    \begin{align*}
        \exp\left( 2\pi i\bsk\cdot \bsx_{h_0+h_1p}^{(p)}\right) & = \exp\left( \frac{2\pi i}{p^2}\sum_{j\in \supp(\bsk)}k_j (h_0+h_1p)^j\right) \\
        & = \exp\left( \frac{2\pi i}{p^2}\sum_{j\in \supp(\bsk)}k_j \sum_{a=0}^{j}\binom{j}{a}h_0^a(h_1p)^{j-a}\right) \\
        & = \exp\left( \frac{2\pi i}{p^2}\sum_{j\in \supp(\bsk)}k_j (h_0^j+j h_0^{j-1}h_1p)\right),
    \end{align*}
    we obtain
    \begin{align*}
        \left|\frac{1}{p^2}\sum_{h=0}^{p^2-1}\exp\left( 2\pi i\bsk\cdot \bsx_h^{(p)}\right)\right| & = \left|\frac{1}{p^2}\sum_{h_0,h_1=0}^{p-1}\exp\left( \frac{2\pi i}{p^2}\sum_{j\in \supp(\bsk)}k_j (h_0^j+j h_0^{j-1}h_1p)\right)\right| \\
        & = \left|\frac{1}{p}\sum_{h_0=0}^{p-1}\exp\left( \frac{2\pi i}{p^2}\sum_{j\in \supp(\bsk)}k_j h_0^j\right)\frac{1}{p}\sum_{h_1=0}^{p-1}\exp\left(\frac{2\pi i h_1}{p}\sum_{j\in \supp(\bsk)}k_j j h_0^{j-1}\right)\right| \\
        & \leq \frac{1}{p}\sum_{h_0=0}^{p-1}\left|\frac{1}{p}\sum_{h_1=0}^{p-1}\exp\left(\frac{2\pi i h_1}{p}\sum_{j\in \supp(\bsk)}k_j j h_0^{j-1}\right)\right|\\
        & = \frac{1}{p}\sum_{\substack{h_0=0\\ \sum_{j\in \supp(\bsk)}k_j j h_0^{j-1}\equiv 0 \pmod p}}^{p-1}1,
    \end{align*}
    where the last equality follows from the well-known character property for the trigonometric functions \cite[Lemma~4.3]{DHP15}. Here, by denoting $j_{\min}=\min_{j\in \supp(\bsk)}j$ and $j_{\max}=\max_{j\in \supp(\bsk)}j$, we have
    \[ \sum_{j\in \supp(\bsk)}k_j j h_0^{j-1}=\sum_{\substack{j=j_{\min}\\ j\in \supp(\bsk)}}^{j_{\max}}k_j j h_0^{j-1}= h_0^{j_{\min}-1}\sum_{\substack{j=j_{\min}\\ j\in \supp(\bsk)}}^{j_{\max}}k_j j h_0^{j-j_{\min}}.\]
    As the last sum over $j$ is a polynomial in $h_0$ with degree $j_{\max}-j_{\min}$, the number of solutions of the congruence $\sum_{j\in \supp(\bsk)}k_j j h_0^{j-1}\equiv 0 \pmod p$ is at most $j_{\max}-j_{\min}+1=\width(\supp(\bsk))$. Thus the result follows. Since the second bound can be proven in the same manner, we omit the details.
\end{proof}

Note that, if $k_j$ is divisible by $p$ for all $j$, i.e., $p\mid \bsk$, then we only have a trivial bound on the exponential sum, which is 1. Using this refined result, we obtain the following bounds on the exponential sums for our point sets $P_{d,m}^1$ and $P_{d,m}^2$.
\begin{corollary}\label{cor:exponential_sum}
    Let $d\in \NN$ and $m\geq 2$ with $\min_{p\in \PP_m}p\geq d$. For any $\bsk\in \ZZ^d\setminus \{\bszero\}$, it holds that
    \[ \left|\frac{1}{|P_{d,m}^1|}\sum_{p\in \PP_m}\sum_{h=0}^{p^2-1}\exp\left( 2\pi i\bsk\cdot \bsx_h^{(p)}\right)\right| \leq \frac{1}{m}\left( 4\width(\supp(\bsk))+\frac{8}{c_{\PP}}\min_{j\in \supp(\bsk)}\log |k_j|\right),\]
    and
    \[ \left|\frac{1}{|P_{d,m}^2|}\sum_{p\in \PP_m}\sum_{h,\ell=0}^{p-1}\exp\left( 2\pi i\bsk\cdot \bsx_{h,\ell}^{(p)}\right)\right| \leq \frac{1}{m}\left( 4\width(\supp(\bsk))+\frac{8}{c_{\PP}}\min_{j\in \supp(\bsk)}\log |k_j|\right).\]
\end{corollary}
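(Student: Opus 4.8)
The plan is to apply Lemma~\ref{lem:exponential_sum} prime by prime. Fix $\bsk\in\ZZ^d\setminus\{\bszero\}$ and split $\PP_m$ into the \emph{good} primes with $p\nmid\bsk$ and the \emph{bad} primes with $p\mid\bsk$. Since $p\ge\min_{q\in\PP_m}q\ge d$ for every $p\in\PP_m$, Lemma~\ref{lem:exponential_sum} applies to each good prime and yields $\bigl|\sum_{h=0}^{p^2-1}\exp(2\pi i\bsk\cdot\bsx_h^{(p)})\bigr|\le p\,\width(\supp(\bsk))$, whereas for a bad prime only the trivial bound $p^2$ is available (cf.\ the remark after Lemma~\ref{lem:exponential_sum}). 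I will carry this out for $P_{d,m}^1$; the bound for $P_{d,m}^2$ follows verbatim, since $T_{d,p}$ also has $p^2$ points and Lemma~\ref{lem:exponential_sum} supplies the same estimate for the points $\bsx_{h,\ell}^{(p)}$.

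The one elementary fact to record first is that every $p\in\PP_m$ satisfies $m/2<p\le m$ and, moreover, $p>\sqrt m$ (for $m\ge 4$ because $p>m/2\ge\sqrt m$, and for $m\in\{2,3\}$ because $p\ge 2>\sqrt m$). Summing the per-prime bounds and normalizing by $|P_{d,m}^1|=\sum_{p\in\PP_m}p^2$, the contribution of the good primes to the exponential sum is at most
\[
\frac{\width(\supp(\bsk))\sum_{p\in\PP_m}p}{\sum_{p\in\PP_m}p^2}\le\frac{\width(\supp(\bsk))\,|\PP_m|\,m}{|\PP_m|(m/2)^2}=\frac{4\,\width(\supp(\bsk))}{m},
\]
which already produces the first term in the claimed bound (and does not even use \eqref{eq:prime_num}).

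For the bad primes, let $N$ be their number and pick $j^\ast\in\supp(\bsk)$ attaining $\min_{j\in\supp(\bsk)}|k_j|$, so that $\log|k_{j^\ast}|=\min_{j\in\supp(\bsk)}\log|k_j|$. Each bad prime divides the nonzero integer $k_{j^\ast}$, and bad primes are pairwise distinct, so their product divides $k_{j^\ast}$; combined with $p>\sqrt m$ this gives $m^{N/2}<\prod_{p\ \mathrm{bad}}p\le|k_{j^\ast}|$ whenever $N\ge 1$, hence $N\le 2\log|k_{j^\ast}|/\log m$ in all cases. Using the lower bound $|P_{d,m}^1|=\sum_{p\in\PP_m}p^2>|\PP_m|(m/2)^2\ge c_\PP m^3/(4\log m)$ coming from \eqref{eq:prime_num}, the contribution of the bad primes is at most
\[
\frac{N m^2}{|P_{d,m}^1|}<\frac{4N\log m}{c_\PP m}\le\frac{8}{c_\PP m}\min_{j\in\supp(\bsk)}\log|k_j|,
\]
and adding the two contributions gives the asserted inequality.

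The only step that requires care is the counting of the bad primes: the point is that the trivial bound $p^2$ is incurred precisely by primes dividing \emph{every} coordinate of $\bsk$, such a prime must in particular divide the coordinate $k_{j^\ast}$ of smallest absolute value, and since each such prime exceeds $\sqrt m$ there can be at most $2\log|k_{j^\ast}|/\log m$ of them. This is exactly the mechanism that converts the otherwise uncontrolled bad contribution into a quantity governed by $\min_{j\in\supp(\bsk)}\log|k_j|$, i.e., by the very term appearing in the norm of $F_d^2$; everything else is routine manipulation of the estimates $m/2<p\le m$ and \eqref{eq:prime_num}.
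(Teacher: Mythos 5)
Your proof is correct and follows essentially the same route as the paper: split $\PP_m$ into primes dividing $\bsk$ and primes not dividing $\bsk$, apply Lemma~\ref{lem:exponential_sum} to the latter with the trivial bound $p^2$ for the former, and then bound the number of ``bad'' primes by noting that they all divide a single coordinate $k_{j^*}$ while each exceeds $\sqrt m$, giving $\le 2\log|k_{j^*}|/\log m$ of them. The only cosmetic difference is that the paper invokes the stated fact that an integer has at most $\log_n k$ prime divisors $\ge n$ with $n=\lceil m/2\rceil+1$, whereas you rederive this via the product of the distinct bad primes; both yield the same estimate.
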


\begin{proof}
    The following proof for the first bound is similar to that of \cite[Corollary~2.3]{G23}, and the second bound can be proven in a similar way, so we omit the details. Using Lemma~\ref{lem:exponential_sum}, we have
    \begin{align*}
        \left|\frac{1}{|P_{d,m}^1|}\sum_{p\in \PP_m}\sum_{h=0}^{p^2-1}\exp\left( 2\pi i\bsk\cdot \bsx_h^{(p)}\right)\right| & \leq \frac{1}{|P_{d,m}^1|}\sum_{p\in \PP_m}\left|\sum_{h=0}^{p^2-1}\exp\left( 2\pi i\bsk\cdot \bsx_h^{(p)}\right)\right| \\
        & \leq \frac{1}{|P_{d,m}^1|}\sum_{\substack{p\in \PP_m\\ p\nmid \bsk}}p\width(\supp(\bsk))+\frac{1}{|P_{d,m}^1|}\sum_{\substack{p\in \PP_m\\ p\mid \bsk}}p^2\\
        & \leq \frac{m|\PP_m|}{|P_{d,m}^1|}\width(\supp(\bsk))+\frac{m^2}{|P_{d,m}^1|}\sum_{\substack{p\in \PP_m\\ p\mid \bsk}}1\\
        & \leq \frac{m|\PP_m|}{(m/2)^2|\PP_m|}\width(\supp(\bsk))+\frac{m^2}{(m/2)^2|\PP_m|}\sum_{\substack{p\in \PP_m\\ p\mid \bsk}}1\\
        & \leq \frac{4}{m}\width(\supp(\bsk))+\frac{4\log m}{c_{\PP}m}\sum_{\substack{p\in \PP_m\\ p\mid \bsk}}1,
    \end{align*}
    where the last inequality follows from \eqref{eq:prime_num}. To give a bound on the last sum over $p\in \PP_m$ which divides $\bsk$, we use the fact that, for any integers $k,n\in \NN$, $k$ has at most $\log_n k$ prime divisors larger than or equal to $n$. With $\II(\cdot)$ denoting the indicator function, for any index $j^*\in \supp(\bsk)$, we get
    \begin{align*}
        \sum_{\substack{p\in \PP_m\\ p\mid \bsk}}1 & = \sum_{p\in \PP_m}\prod_{j\in \supp(\bsk)}\II(p\mid k_j)\leq \sum_{p\in \PP_m}\II(p\mid k_{j^*}) \leq \log_{\lceil m/2\rceil +1}|k_{j^*}|\leq \frac{2\log |k_{j^*}|}{\log m}.
    \end{align*}
    Since this inequality applies to any index $j^*\in \supp(\bsk)$, it holds that
    \[ \sum_{\substack{p\in \PP_m\\ p\mid \bsk}}1 \leq \frac{2}{\log m}\min_{j\in \supp(\bsk)}\log |k_j|.\]
    This completes the proof.
\end{proof}

Now we are ready to prove Theorem~\ref{thm:main1}.
\begin{proof}[Proof of Theorem~\ref{thm:main1}]
    Since any function $f\in F_d^2$ has an absolutely convergent Fourier series, by letting $Q_{d,n}$ being the QMC rule using $P_{d,m}^1$ (or $P_{d,m}^2$) for some $m\geq 2$ with $\min_{p\in \PP_m}p\geq d$, it follows from Corollary~\ref{cor:exponential_sum} that, with $n$ equal to $\sum_{p\in \PP_m}p^2$,
    \begin{align*}
        \left| I_d(f)-Q_{d,n}(f)\right| & = \left| I_d(f)-\frac{1}{|P_{d,m}^1|}\sum_{p\in \PP_m}\sum_{h=0}^{p^2-1}f(\bsx_h^{(p)})\right| \\
        & = \left| \hat{f}(\bszero)-\frac{1}{|P_{d,m}^1|}\sum_{p\in \PP_m}\sum_{h=0}^{p^2-1}\sum_{\bsk\in \ZZ^d}\hat{f}(\bsk)\exp\left( 2\pi i\bsk\cdot \bsx_h^{(p)}\right)\right| \\
        & = \left| \sum_{\bsk\in \ZZ^d\setminus \{\bszero\}}\hat{f}(\bsk)\frac{1}{|P_{d,m}^1|}\sum_{p\in \PP_m}\sum_{h=0}^{p^2-1}\exp\left( 2\pi i\bsk\cdot \bsx_h^{(p)}\right)\right|\\
        & \leq \sum_{\bsk\in \ZZ^d\setminus \{\bszero\}}|\hat{f}(\bsk)|\left| \frac{1}{|P_{d,m}^1|}\sum_{p\in \PP_m}\sum_{h=0}^{p^2-1}\exp\left( 2\pi i\bsk\cdot \bsx_h^{(p)}\right)\right|\\
        & \leq \frac{1}{m}\sum_{\bsk\in \ZZ^d\setminus \{\bszero\}}|\hat{f}(\bsk)|\left( 4\width(\supp(\bsk))+\frac{8}{c_{\PP}}\min_{j\in \supp(\bsk)}\log |k_j|\right) \\
        & \leq \frac{16}{c_{\PP}m}\sum_{\bsk\in \ZZ^d\setminus \{\bszero\}}|\hat{f}(\bsk)|\max\left(\width(\supp(\bsk)),\min_{j\in \supp(\bsk)}\log |k_j|\right) \leq \frac{16}{c_{\PP}m}\|f\|.
    \end{align*}
    This leads to an upper bound on the worst-case error as
    \[ e^{\wor}(F_d^2,Q_{d,n})\leq \frac{16}{c_{\PP}m}.\]
    Therefore, in order to make $e^{\wor}(F_d^2,Q_{d,n})$ less than or equal to $\varepsilon \in (0,1)$, it suffices to choose $m=\lceil 16c_{\PP}^{-1}\varepsilon^{-1}\rceil$ and we have
    \[  n(\varepsilon, d, F_d^2)\leq \sum_{p\in \PP_{\lceil 16c_{\PP}^{-1}\varepsilon^{-1}\rceil}}p^2 \leq C_{\PP}\frac{\lceil 16c_{\PP}^{-1}\varepsilon^{-1}\rceil}{\log \lceil 16c_{\PP}^{-1}\varepsilon^{-1}\rceil}\times \left(\lceil 16c_{\PP}^{-1}\varepsilon^{-1}\rceil\right)^2, \]
    from which the result follows immediately.
\end{proof}

\section{Proof of Theorem~\ref{thm:main2}}

\begin{proof}[Proof of Theorem~\ref{thm:main2}]
We adopt a similar approach as in the proofs of \cite[Theorem~1]{SW97} and \cite[Theorem~1]{DLPW11}. Consider an arbitrary linear algorithm $Q_{d,n}(f)=\sum_{h=0}^{n-1}w_h f(\bsx_h)$. For a set $\Acal\subset \ZZ^d$ with enough cardinality $|\Acal|> n$, we define a function $g: [0,1)^d\to \CC$ by 
\[ g(\bsx)=\sum_{\bsk\in \Acal}c_{\bsk}\exp(2\pi i\bsk\cdot \bsx)\]
with $c_{\bsk}\in \CC$, which satisfies $g(\bsx_h)=0$ for all $h=0,\ldots,n-1$. In fact, there exists a non-zero vector of $(c_{\bsk})_{\bsk\in \Acal}$, as the condition that $g(\bsx_h)=0$ for all $h=0,\ldots,n-1$ forms $n$ homogeneous linear equations with $|\Acal|>n$ unknowns $c_{\bsk}$. Let us normalize these coefficients in such a way that
\[ \max_{\bsk\in \Acal}|c_{\bsk}|=c_{\bsell}=1\quad \text{for some $\bsell\in \Acal$.}\]

With this $\bsell$ and a positive constant $C$, we define another function $\tilde{g}: [0,1)^d\to \CC$ as follows:
\[ \tilde{g}(\bsx)=C\exp(-2\pi i\bsell\cdot \bsx)g(\bsx)=C\sum_{\bsk\in \Acal}c_{\bsk}\exp(2\pi i(\bsk-\bsell)\cdot \bsx). \]
Then we construct a real-valued function $g^{\star}$ defined on $[0,1)^d$ by taking the average of $\tilde{g}$ and its complex conjugate: $g^{\star}(\bsx)=(\tilde{g}(\bsx)+\overline{\tilde{g}(\bsx)})/2$. 
Regarding the norm of $g^{\star}$ in $F_d^1$, we have
\begin{align*}
    \|g^{\star}\| & \leq \frac{\|\tilde{g}\|+\|\overline{\tilde{g}}\|}{2} = \|\tilde{g}\| = C\sum_{\bsk\in \Acal}|c_{\bsk}|\max\left(1,\min_{j\in \supp(\bsk)}\log |k_j-\ell_j|\right)\\
    & \leq C\sum_{\bsk\in \Acal}\max\left(1,\min_{j\in \supp(\bsk)}\log |k_j-\ell_j|\right) \leq C\max_{\bsell\in \Acal}\sum_{\bsk\in \Acal}\max\left(1,\min_{j\in \supp(\bsk)}\log |k_j-\ell_j|\right).
\end{align*}
To ensure $\|g^{\star}\|\leq 1$, we set
\[ C=\left( \max_{\bsell\in \Acal}\sum_{\bsk\in \Acal}\max\left(1,\min_{j\in \supp(\bsk)}\log |k_j-\ell_j|\right)\right)^{-1}.\]

By construction, we have $g^{\star}(\bsx_h)=0$ for all $h=0,\ldots,n-1$, which implies $Q_{n,d}(g^{\star})=0$. On the other hand, the exact integral is given by
\[ I_d(g^{\star})=Cc_{\bsell}=C=\left( \max_{\bsell\in \Acal}\sum_{\bsk\in \Acal}\max\left(1,\min_{j\in \supp(\bsk)}\log |k_j-\ell_j|\right)\right)^{-1}.\]
Since $g^{\star}\in F_d^1$ with $\|g^{\star}\|\leq 1$, the worst-case error of any linear algorithm $Q_{d,n}$ is bounded below by
\[ e^{\wor}(F_d^1,Q_{d,n})\geq \left| I_d(g^{\star})-Q_{n,d}(g^{\star})\right|=\left( \max_{\bsell\in \Acal}\sum_{\bsk\in \Acal}\max\left(1,\min_{j\in \supp(\bsk)}\log |k_j-\ell_j|\right)\right)^{-1}.\]

In what follows, let 
\[ \Acal=\{\bszero\} \cup \left\{ \bsk\in \ZZ^d\, \mid\,  \text{$d-1$ of $k_j$ are all $0$ and one non-zero $k_j$ is from $\{1,\ldots,\lceil n/d\rceil\}$}\right\}. \]
It is easy to verify that $|\Acal|=1+d\lceil n/d\rceil > n$. For this choice of $\Acal$, we can restrict ourselves to $\bsell =(\ell,0,\ldots,0)$ for some $\ell\in \{0,\ldots,\lceil n/d\rceil\}$. By utilizing the assumption $n>2d$ and the well-known inequality $\log x \leq x-1$, we have
\begin{align*}
    & \sum_{\bsk\in \Acal}\max\left(1,\min_{j\in \supp(\bsk)}\log |k_j-\ell_j|\right) \\
    & = \max\left(1,\log \ell\right) + \sum_{k_1=1}^{\lceil n/d\rceil}\max\left(1,\log |k_1-\ell|\right) + \sum_{j=2}^{d}\sum_{k_j=1}^{\lceil n/d\rceil}\max\left(1,\log k_j\right) \\
    & \leq \log \lceil n/d\rceil + d\sum_{k=1}^{\lceil n/d\rceil}\max\left(1,\log k\right) \\
    & \leq \log \lceil n/d\rceil + d\lceil n/d\rceil\log \lceil n/d\rceil\\
    & \leq \left(\lceil n/d\rceil-1\right)\cdot \left( 1+d\lceil n/d\rceil\right)\leq \frac{2n^2}{d}.
\end{align*}
Since the last bound is independent of $\ell$, we obtain
\[ e^{\wor}(F_d^1,Q_{d,n})\geq \left( \max_{\bsell\in \Acal}\sum_{\bsk\in \Acal}\max\left(1,\min_{j\in \supp(\bsk)}\log |k_j-\ell_j|\right)\right)^{-1}\geq \frac{d}{2n^2}.\]
This completes the proof.
\end{proof}

\bibliographystyle{plain}
\bibliography{ref.bib}

\end{document}